\newcommand{\ee}{\mathrm{e}}
\newcommand{\p}{{\mathbb P}}
\newcommand{\e}{{\mathbb E}}
\newcommand{\D}{{\mathrm d}}
\newcommand{\1}[1]{1_{\{#1\}}}
\renewcommand{\a}{{\alpha}}
\renewcommand{\b}{{\beta}}
\renewcommand{\l}{{\lambda}}
\renewcommand{\t}{{\theta}}
\newcommand{\vt}{{\vartheta}}
\newtheorem{theorem}{Theorem}[section]
\newtheorem{lemma}{Lemma}[section]
\begin{document}
\begin{frontmatter}

\title{Exit identities for L\'evy processes observed at Poisson arrival times}
\runtitle{Exit identities for L\'evy processes observed at Poisson
arrival times}

\begin{aug}
\author[A]{\inits{H.}\fnms{Hansj\"org}~\snm{Albrecher}\corref{}\thanksref{A}\ead[label=e1]{hansjoerg.albrecher@unil.ch}},
\author[B]{\inits{J.}\fnms{Jevgenijs}~\snm{Ivanovs}\thanksref{B}\ead[label=e2]{jevgenijs.ivanovs@unil.ch}}
\and
\author[C]{\inits{X.}\fnms{Xiaowen}~\snm{Zhou}\thanksref{C}\ead[label=e3]{zhou@alcor.concordia.ca}}
\address[A]{Department of Actuarial Science,
Faculty of Business and Economics,
University of Lausanne,
CH-1015 Lausanne,
Switzerland and Swiss Finance Institute, Switzerland.\\ \printead{e1}}
\address[B]{Department of Actuarial Science, Faculty of Business and
Economics, University of Lausanne, CH-1015 Lausanne, Switzerland.
\printead{e2}}
\address[C]{Department of Mathematics and Statistics, Concordia
University, 1455 de Maisonneuve Blvd. W., Montreal, Quebec, Canada H3G 1M8.
\printead{e3}}
\end{aug}

%
\received{\smonth{3} \syear{2014}}
%
\revised{\smonth{9} \syear{2014}}

%
\begin{abstract}
For a spectrally one-sided L\'{e}vy process, we extend various
two-sided exit identities to the situation when the
process is only observed at arrival epochs of an independent Poisson process.
In addition, we consider exit problems of this type for processes
reflected either  from above or from below.
The resulting Laplace transforms of the main quantities of interest are in
terms of scale functions and turn
out to be simple analogues of the classical formulas.
\end{abstract}

%
\begin{keyword}
\kwd{Cram\'er--Lundberg risk model}
\kwd{dividends}
\kwd{exit problem}
\kwd{reflection}
\kwd{spectrally negative Levy process}
\end{keyword}
\end{frontmatter}

\section{Introduction}\label{intro}
Consider a spectrally-negative L\'{e}vy process $X$, that is, a L\'
{e}vy process
with only negative jumps, and which is not a.s. a non-increasing process.
Let
\[
\psi(\t):=\log\e \ee^{\t X(1)},\qquad \t\geq0,
\]
be its Laplace exponent. Denote the law of $X$ with $X(0)=x\ge0$ by
$\p_x$ and the corresponding expectation by $\e_x$.
For a fixed $a\geq0$ define the first passage times
\begin{eqnarray*}
\tau_0^-&:=&\inf\bigl\{t\geq0\dvt X(t)<0\bigr\},\qquad
\tau_a^+:=
\inf\bigl\{t\geq0\dvt X(t)>a\bigr\}.
\end{eqnarray*}
Furthermore, let $T_i$ be the arrival times of an independent Poisson
process of rate $\lambda>0$, and
define the following stopping times:
\begin{eqnarray*}
T_0^-&:=&\min\bigl\{T_i\dvt X(T_i)<0\bigr\},\qquad
T_a^+:=\min\bigl\{T_i\dvt X(T_i)>a\bigr\}.
\end{eqnarray*}
The latter times can be seen as first passage times when $X$ is
observed at Poisson arrival times (by convention $\inf\varnothing=\min
\varnothing=\infty$).

These quantities have useful interpretations in various fields of
applied probability. For instance, if $X$ serves as a model for the
surplus process of an insurance portfolio over time, then $\p_x(\tau
_0^-<\infty)$
is the probability of ruin of the portfolio with initial capital $x$.
Likewise, $\p_x(T_0^-<\infty)$ is the probability that ruin occurs
and is detected, given that the process can only\vadjust{\goodbreak} be monitored at
discrete points in
time modeled by an independent Poisson process.
It is not hard to show that $T_0^-\downarrow\tau_0^-$ a.s. (and
similarly $T_a^+\downarrow\tau_a^+$ a.s.) as the observation rate
$\lambda$ tends to $\infty$,
which may be used to retrieve the classical exit identities.
Quantities related to $T_0^-$ have been studied for a compound Poisson
risk model in \cite{actS}, and in \cite{landriaultoccupation} a
simple formula for $\p_x(T_0^-<\infty)$
was established for general spectrally-negative L\'{e}vy processes $X$.
Ruin-related quantities under surplus-dependent observation rates were
studied in \cite{AL} and for recent results on observation rates
that change according to environmental
conditions, we refer to \cite{albrecherivanovsrisks}.
Poissonian observation is also relevant in queueing contexts, see, for
example, \cite{bekker}.

In practice one may interpret continuous and Poissonian observation as
endogenous and exogenous monitoring of the process of interest, respectively.
For example, in an insurance context $\tau_0^-$ may be understood as
the time of ruin (observed by the insurance company),
whereas $T_a^+$ may be considered as the first time when shareholders
receive dividends (they look at the company at discrete, here random, times).
Hence, the shareholders receive dividends if the event $\{T_a^+<\tau
_0^-\}$ occurs.
Another example comes from reliability theory~\cite{nakagawa2006maintenance},
where one considers a degradation process and assumes that $\tau_0^-$
is the time of failure and $T_a^-:=\min\{T_i\dvt X(T_i)<a\}$ for some
$a>0$ is the time at which the process is
observed in its critical state necessitating replacement. Hence, the
event $\{T_a^-<\tau_0^-\}$ signifies preventive replacement before failure.
Finally, some identities involving both continuous and Poissonian
observation lead to transforms of certain occupation times, see
Remark~\ref{rem}.

In this paper, we establish formulas for two-sided exit probabilities
under Poissonian observation, as well as formulas for
the joint transform of the exit time and the corresponding overshoot on
the event of interest. In addition, we consider reflected processes and
provide the joint transforms
including the total amount of output (dividends) or input (required
capital to remain solvent) up to the exit time.
Note that the formulas also
hold for spectrally-positive L\'{e}vy processes by simply exchanging the
roles of the involved quantities. It turns out that the resulting
formulas have a rather slim form in terms of first and second scale
functions, and
along the way it also proves useful to define a third scale function.
The form of the expressions allows to interpret them as natural
analogues of the respective counterparts under continuous observation.
Finally, we note that discrete observation allows for a wide range of
cases, and our list of exit identities is not exhaustive.
We only consider the basic cases, that is, the ones where the
corresponding events stay non-trivial if Poissonian observation is
replaced by a continuous one, which, for example, excludes $\{
T_a^-<\tau_0^-\}$ mentioned above.

The remainder of this paper is organized as follows. Section~\ref
{sec2} recalls some relevant exit identities under continuous observation.
Section~\ref{sec4} contains the main results, and the proofs are given
in Section~\ref{sec5}.
Finally, Section~\ref{sec7} gives an illustration of some identities
for the case of Cram\'er--Lundberg risk model with exponential claims.

Throughout this work, we use $e_u$ to denote an exponentially
distributed r.v.
with rate $u>0$, which is independent of everything else.


\section{Standard exit theory}\label{sec2}
The most basic identity states that
%
\begin{equation}
\label{bas}\p_0\bigl(\tau_a^+<\infty
\bigr)=\ee^{-\Phi a},\qquad a\geq0,
\end{equation}
where $\Phi\geq0$ is the right-most non-negative solution of $\psi
(\theta)=0$.
Let us recall two fundamental functions which enter various exit identities.
The (first) scale function $W(x)$ is a non-negative function, with
$W(x)=0$ for $x<0$, continuous on $[0,\infty)$, positive for positive $x$,
and characterized by the transform
\[
\int_0^\infty \ee^{-\t x}W(x)\,\D x=1/\psi(\t),\qquad
\t>\Phi.
\]
It enters the basic two-sided exit identity for $a>0$ through
%
\begin{equation}
\p_x\bigl(\tau_a^+<\tau_0^-\bigr)=W(x)/W(a),
\qquad x\leq a,\label{eqW}
\end{equation}
see, for example, \cite{kyprianou}.

The so-called second scale function is defined by
%
\begin{equation}
\label{eqZ} Z(x,\t):=\ee^{\t x} \biggl(1-\psi(\t)\int_0^x
\ee^{-\t y}W(y)\,\D y \biggr),\qquad x\geq0
\end{equation}
and $Z(x,\t):=\ee^{\t x}$ for $x<0$. Note that for $\t=0$, $Z(x,\t)$
reduces to $Z(x)$ as defined in
\cite{kyprianou}, Chapter~2.
It is convenient to define $Z$ as a function of two arguments, which
allows to provide more general formulas.
We refer to~\cite{ivanovsscale} for this definition and the following
formulas in a more general setting of Markov additive processes.
Note that for $\t>\Phi$ we can rewrite $Z(x,\t)$ in the form
%
\begin{eqnarray}
Z(x,\t)=\psi(\t)\int_0^\infty \ee^{-\theta
y}W(x+y)\,\D y,\qquad x\geq0, \t>\Phi.\label{eqZalt}
\end{eqnarray}
It is known that for $x\leq a$ one has
%
\begin{eqnarray}
\e_x \bigl(\ee^{\t X(\tau_0^-)};\tau_0^-<
\tau_a^+ \bigr)=Z(x,\t)-W(x)\frac{Z(a,\t)}{W(a)}.\label{eqexit0}
\end{eqnarray}
Moreover, for $\t>\Phi$ we have
%
\begin{equation}
\label{eqlim} \lim_{a\rightarrow\infty}Z(a,\t)/W(a)=\psi(\t)/(\t-\Phi)
\end{equation}
and so we also find that
%
\begin{equation}
\label{eqexit00}\e_x \bigl(\ee^{\t X(\tau_0^-)},\tau_0^-<\infty
\bigr)=Z(x,\t)-W(x)\frac{\psi(\t)}{\t-\Phi}.
\end{equation}

Importantly, all the above results hold for an exponentially killed
process $X$ (cf.~\cite{ivanovskilling}): 
For a killing rate $q>0$, we write $\psi_q(\t)=\psi(\t)-q$, then
$\Phi_q>0$ is the positive solution to $\psi_q(\t)=0$, and $W_q(x)$
is defined by
\[
\int_0^\infty \ee^{-\t x}W_q(x)\,\D
x=1/\psi_q(\t),\qquad \t>\Phi_q.
\]
With $Z_q(x,\theta)$ defined through $W_q(x)$ and $\psi_q(\theta)$,
formula (\ref{eqexit0}) in the case of killing reads
\begin{eqnarray*}
\e_x \bigl(\ee^{-q \tau_0^-+\t X(\tau_0^-)};\tau_0^-<
\tau_a^+ \bigr) &=& \e_x \bigl(\ee^{\t X(\tau_0^-)};
\tau_0^-<\tau_a^+,\tau_0^-<e_q
\bigr)
\\
&=& Z_q(x,\t)-W_q(x)\frac{Z_q(a,\t)}{W_q(a)},
\end{eqnarray*}
and hence the information on the time of the exit is easily added. The
same adaptations hold for the other exit identities above.
For the sake of readability, we will often drop the index $q$ in the
sequel, if it does not cause confusion.
In this case, $\Phi_\l,W_\l(x),Z_\l(x,\t)$ should be interpreted
as $\Phi_{\l+q}, W_{\l+q}(x),Z_{\l+q}(x,\t)$, respectively, that is,
they correspond to the process killed at rate $q$ and then additionally
killed at rate $\l$.

Finally, we will need the following identities which can readily be
obtained from the known formulas for potential densities of $X$ killed
upon exiting a certain interval,
see, for example, \cite{bertoinexpdecay} or \cite{kyprianou}, Chapter~8.4:
%
\begin{eqnarray}
\p\bigl(X(e_\l)\in\D x\bigr)&=&\l\bigl(\ee^{-\Phi_\l x}/
\psi'(\Phi_\l)-W_\l(-x) \bigr)\,\D
x,\label{eqpotential}
\\
\p\bigl(X(e_\lambda)\in\D x,e_\lambda<\tau_a^+
\bigr)&=&\l\bigl(\ee^{-\Phi_\l
a}W_\l(a-x)-W_\l(-x)
\bigr)\,\D x,\label{eqpotential1}
\\
\p_a\bigl(X(e_\l)\in\D x,e_\l<
\tau_0^-\bigr)&=&\l\bigl(\ee^{-\Phi_\l x}W_\l
(a)-W_\l(a-x) \bigr)\,\D x,\label{eqpotential2}
\end{eqnarray}
where $a>0$ and the killing rate $q\geq0$ is implicit.

\section{Results}\label{sec4}
One of the first general results concerning $T_0^-$ was obtained
in~\cite{landriaultoccupation}, where it was shown for $q=0$ and $\e
X(1)>0$ that
%
\begin{equation}
\p_x\bigl(T_0^-=\infty\bigr)=\psi'(0)
\frac{\Phi_\l}{\l
}Z(x,\Phi_\l),\qquad x\geq0,\label{eqfirst}
\end{equation}
cf.~(\ref{eqZalt}).
This leads to a strikingly simple identity for $x=0$:
$\p(T_0^-=\infty)=\psi'(0){\Phi_\l}/{\l}$. A more general result
was recently obtained in~\cite{albrecherivanovsrisks} for an
arbitrary killing rate $q\geq0$:
%
\begin{equation}
\p_x\bigl(\tau_a^+<T_0^-\bigr)=
\frac{Z(x,\Phi_\l)}{Z(a,\Phi
_\l)},\qquad x\in[0,a].\label{eqrisk}
\end{equation}
It is easy to see that both these results also hold for $x<0$.
Note the resemblance of~(\ref{eqW}) and (\ref{eqrisk}), and, moreover,
%
\begin{equation}
\label{eqlimZZ}\frac{Z(x,\Phi_\l)}{Z(a,\Phi_\l
)}=\p_x\bigl(\tau_a^+<T_0^-
\bigr)\longrightarrow\p_x\bigl(\tau_a^+<\tau
_0^-\bigr)=\frac{W(x)}{W(a)}\qquad\mbox{as }\l\rightarrow\infty,
\end{equation}
because in the limit $\l\rightarrow\infty$ the Poisson observation
results in continuous observation of the process a.s. (although it is
hard to see the convergence of the ratio directly).

We now present various exit identities for continuous and Poisson
observations extending the standard exit theory.

\begin{theorem}\label{thm1}
For $a,\theta\geq0,x\leq a$ and implicit killing rate $q\geq0$, we have
%
\begin{eqnarray}
\e_x \bigl(\ee^{\theta X(T_0^-)};T_0^-<\infty\bigr)&=&
\frac{\lambda
}{\lambda-\psi(\theta)} \biggl(Z(x,\t)-Z(x,\Phi_\l)\frac{\psi(\t
)(\Phi_\l-\Phi)}{\l(\t-\Phi)}
\biggr),\label{eq1}
\\[-2pt]
\e_x \bigl(\ee^{\theta X(T_0^-)};T_0^-<\tau_a^+
\bigr)&=&\frac{\lambda
}{\lambda-\psi(\theta)} \biggl(Z(x,\t)-Z(x,\Phi_\l)
\frac{Z(a,\t
)}{Z(a,\Phi_\l)} \biggr),\label{eq2}
\\[-2pt]
\e_x \bigl(\ee^{-\t(X(T_a^+)-a)};T_a^+<\infty\bigr)&=&
\frac{\Phi_\l
-\Phi}{\Phi_\l+\theta}\ee^{-\Phi(a-x)},\label{eq3}
\\[-2pt]
\e_x \bigl(\ee^{-\t(X(T_a^+)-a)};T_a^+<\tau_0^-
\bigr)&=&\frac{\l
}{\Phi_\l+\t} \frac{W(x)}{Z(a,\Phi_\l)},\label{eq4}
\\[-2pt]
\e_x \bigl(\ee^{\t X(\tau_0^-)};\tau_0^-<T_a^+
\bigr)&=&Z(x,\t)-\frac
{W(x)}{\t-\Phi_\l} \biggl(\psi(\t)-\l\frac{Z(a,\t)}{Z(a,\Phi_\l
)}
\biggr),\label{eq7}
\end{eqnarray}
where ratios for $\theta=\Phi$ and $\theta=\Phi_\lambda$ should be
interpreted in the limiting sense.
\end{theorem}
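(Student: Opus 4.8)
The plan is to reduce each identity to the classical exit theory of Section~\ref{sec2} by conditioning on the position of $X$ at the observation epochs, using the fact that the Poisson observation times are the jump times of an independent rate-$\lambda$ process. The key device is the following: between consecutive observations the process runs freely, and at each observation we check whether the relevant region ($(-\infty,0)$ or $(a,\infty)$) has been entered. Since an independent geometric number of exponential-$\lambda$ time increments elapse, it is natural to work with the process killed at an independent rate $\lambda$, i.e.\ to replace $X$ by $X$ evaluated up to $e_\lambda$, and to use the convention (already set up in the excerpt) that $\Phi_\lambda, W_\lambda, Z_\lambda$ refer to the process additionally killed at rate $\lambda$.

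First I would treat the upward-passage identities \eqref{eq3} and \eqref{eq4}, since these are the cleanest. For \eqref{eq4}, observe that up to $\tau_0^-$ the process behaves as if killed on exiting $[0,\infty)$ from below; the first observation epoch $T_i$ at which $X(T_i)>a$ can be found by a last-exit / excursion decomposition at the level $a$. Concretely, by the strong Markov property at the first observation after which $X$ exceeds $a$, and using that the overshoot is governed by the stationary overshoot of a spectrally-negative process over $a$, one gets a product of (i) the probability/transform of reaching a neighbourhood of $a$ before $\tau_0^-$, which brings in $W(x)$ and $Z(a,\Phi_\lambda)$ via \eqref{eq:risk} or its overshoot refinement, and (ii) an exponential-overshoot factor $\lambda/(\Phi_\lambda+\theta)$ coming from \eqref{eq:potential1}–\eqref{eq:potential2} evaluated at the relevant boundary. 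For \eqref{eq3} one removes the lower barrier (let $a\to\infty$ in the appropriate sense, or argue directly with \eqref{eq:potential}), and the basic identity \eqref{bas} supplies the factor $e^{-\Phi(a-x)}$ while the remaining factor $(\Phi_\lambda-\Phi)/(\Phi_\lambda+\theta)$ comes from the overshoot density in \eqref{eq:potential}.

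Next I would do the downward identities \eqref{eq1}, \eqref{eq2}, \eqref{eq7}. The natural approach to \eqref{eq2} is to condition on the value $X(e_\lambda)$ on the event $\{e_\lambda<\tau_a^+\}$: if this value is already negative, the contribution is computed from \eqref{eq:potential1}; if it is in $[0,a]$, we restart by the Markov property and iterate. This yields an integral equation whose solution, using the explicit potential densities \eqref{eq:potential1}–\eqref{eq:potential2} and the defining transform of $W$, collapses to the stated expression; the prefactor $\lambda/(\lambda-\psi(\theta))$ is exactly $\e(e^{\theta X(e_\lambda)})/$normalisation and accounts for the ``extra'' exponential piece of time between the true exit and its detection. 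Identity \eqref{eq1} then follows from \eqref{eq2} by letting $a\to\infty$ and invoking \eqref{eq:lim} (applied to the $\lambda$-killed quantities to turn $Z(a,\t)/Z(a,\Phi_\lambda)$ into $\psi(\t)(\Phi_\lambda-\Phi)/(\l(\t-\Phi))$). For \eqref{eq7}, which concerns the \emph{continuously} observed exit time $\tau_0^-$ but with an upper \emph{Poisson} barrier, I would decompose on whether $X$ has reached a neighbourhood of $a$ under Poisson observation before $\tau_0^-$: on the complement we are in the classical two-sided setting \eqref{eq:exit0}, and the correction term is assembled from \eqref{eq:risk}, \eqref{eq4} and the strong Markov property, with the denominator $\t-\Phi_\lambda$ appearing because past the barrier the process is effectively $\lambda$-killed.

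The main obstacle I anticipate is bookkeeping the ``mismatch'' between the true first-passage time and the first detecting observation: one must correctly account for the residual exponential time and the corresponding shift in the overshoot law, and verify that the algebra of the potential densities \eqref{eq:potential}–\eqref{eq:potential2} combined with the transform characterisation of $W$ genuinely telescopes into the compact $Z$-function expressions, including checking the $\theta=\Phi$ and $\theta=\Phi_\lambda$ limits by continuity. A cleaner route that avoids the integral equations — and the one I would ultimately prefer — is to use \eqref{eq:risk} (already proved) together with the strong Markov property at $\tau^+$ of suitable levels and at observation epochs, writing every quantity as a composition of already-known two-sided transforms; the scale-function identity \eqref{eq:Z_alt} is then the algebraic engine that rewrites the resulting convolutions as values of $Z$.
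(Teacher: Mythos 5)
Your overall strategy---reduce everything to classical exit theory via the strong Markov property at passage times and observation epochs, using the potential densities \eqref{eq:potential}--\eqref{eq:potential2}---is the right one, and your derivations of \eqref{eq1} from \eqref{eq2} and of \eqref{eq3} from \eqref{eq4} by taking limits with \eqref{eq:lim} match the paper exactly. But as it stands the proposal is a plan rather than a proof, and the parts you defer are precisely where the work lies. For \eqref{eq2} you propose conditioning on $X(e_\lambda)$ and iterating over observation epochs; this produces a genuine Fredholm-type integral equation for the unknown \emph{function} $f(\cdot,\theta,a)$ on $[0,a]$, and you assert without argument that it ``collapses.'' The paper instead applies the strong Markov property at the continuously-observed time $\tau_0^-$: from the overshoot position $z<0$ the process either recovers to $0$ before the next observation (probability $e^{\Phi_\lambda z}$, restart from $0$) or is caught below $0$ (transform \eqref{eq:exp_time}). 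This reduces the problem to a single scalar unknown $f(0,\theta,a)$, determined by setting $x=0$. Even then there is a subtlety you do not address at all: when $X$ has paths of unbounded variation, $W(0)=0$ and the $x=0$ equation is vacuous, so the paper needs a separate limiting argument as $x\downarrow 0$ (using \eqref{eq:limW} and $W_\lambda(x)/W(x)\to 1$) to recover $f(0,\theta,a)$.

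The second concrete gap is the algebraic engine. For \eqref{eq4} and \eqref{eq7} the paper first peels off the continuous passage to $a$ (giving the factor $W(x)/W(a)$, respectively the classical term \eqref{eq:exit0}), and then conditions on the first observation from level $a$ using \eqref{eq:potential2}; the resulting renewal equation for the scalar $f(a)$ is solved only because of the convolution identity \eqref{eq:scale_conv}, $\lambda\int_0^a W_\lambda(a-x)W(x)\,\D x=W_\lambda(a)-W(a)$, and its generalization Lemma~\ref{lem:Zrepr}, $\lambda\int_0^a W_\lambda(a-x)Z(x,\theta)\,\D x=Z_\lambda(a,\theta)-Z(a,\theta)$. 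These are what make the integrals ``telescope into compact $Z$-expressions''; you explicitly flag this telescoping as the anticipated obstacle but offer nothing to overcome it, and \eqref{eq:Z_alt} alone will not do it. Relatedly, your claimed product structure for \eqref{eq4} (probability of reaching $a$ times an overshoot factor) is the \emph{conclusion} of solving that renewal equation, not a step one may assume; the factor $\lambda/(\Phi_\lambda+\theta)$ and the denominator $Z(a,\Phi_\lambda)$ both emerge only after applying \eqref{eq:scale_conv}. So the proposal correctly identifies the toolbox but leaves the two essential ingredients---the scale-function convolution identities and the unbounded-variation limit---unsupplied.
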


\begin{remark}
When $\e X(1)>0$ and $q=\t=0$, we have $\Phi=0$ and $Z(x,0)=1$, so
that~(\ref{eq1}) reduces to (\ref{eqfirst}). Secondly, note the
resemblance between (\ref{eqexit0}) and (\ref{eq2}), and that the
first is retrieved from the second when $\l\rightarrow\infty$
cf.~(\ref{eqlimZZ}).
Next, (\ref{eq3}) is an extension of identity (\ref{bas}) (which is
retained for $\theta=0$ and $\l\rightarrow\infty$).
This formula also implies that the overshoot $X(T_a^+)-a$ given $\{
T_a^+<\infty\}$ and $q=0$ is exponentially distributed with rate $\Phi
_\l$ for all $x\leq a$
(indeed it is not hard to establish the memoryless property of this overshoot).
The identity (\ref{eq4}) is a variation of~(\ref{eqW}) and (\ref
{eqrisk}), and identity (\ref{eq7}) is the counterpart of~(\ref
{eq4}) for the process $-X$ (reproducing (\ref{eqexit0}) for $\l
\rightarrow\infty$ and
(\ref{eqexit00}) for $\l\downarrow0$).
\end{remark}

\begin{remark}\label{rem}
There is a close link between some of our results and transforms of
certain occupation times:
\begin{eqnarray*}
\e_x \bigl(\ee^{\t X(\tau_0^-)};\tau_0^-<T_a^+
\bigr)&=&\e_x \bigl(\ee^{\t X(\tau_0^-)};\tau_0^-<\infty, N(A)=0
\bigr)
\\[-2pt]
&=& \e_x \bigl(\ee^{\t X(\tau_0^-)-\l\int_0^{\tau_0^-}1_{\{X(t)>a\}}\,\D
t};\tau_0^-<\infty
\bigr),
\end{eqnarray*}
where $A=\{t\in[0,\tau_0^-)\dvt X(t)>a\}$ and $N$ is an independent
Poisson random measure with intensity $\lambda\,\D t$. A similar
identity also holds for $\p_x(\tau_a^+<T_0^-)$.
Hence,~(\ref{eqrisk}) and~(\ref{eq7}) for $\t=0$ can alternatively
be obtained from~\cite{loeffenocctimes}, Corollaries~1 and 2,
and taking appropriate limits followed by somewhat tedious simplifications.
\end{remark}

The next result considers two-sided exit for exclusively Poissonian
observation of the process.

\begin{theorem}\label{thm2}
For $a,\theta\geq0, x\leq a$ and implicit killing rate $q\geq0$, we have
%
\begin{eqnarray}
\e_x \bigl(\ee^{\theta X(T_0^-)};T_0^-<T_a^+
\bigr)&=&\frac{\l}{\l
-\psi(\t)} \biggl(Z(x,\t)-Z(x,\Phi_\l)
\frac{\tilde Z(a,\Phi_\l,\t)}{\tilde Z(a,\Phi_\l,\Phi_\l)} \biggr),
\label{eq5}
\\[-2pt]
\e_x \bigl(\ee^{-\t(X(T_a^+)-a)};T_a^+<T_0^-
\bigr)&=&\frac{\l}{\Phi
_\l+\theta}\frac{Z(x,\Phi_\l)}{\tilde Z(a,\Phi_\l,\Phi_\l
)},\label{eq6}
\end{eqnarray}
where we define a third scale function as
%
\begin{equation}
\label{eqZZ} \tilde Z(x,\a,\b):=\frac{\psi(\a)Z(x,\b)-\psi(\b)Z(x,\a
)}{\a
-\b},\qquad\a,\b\geq0.
\end{equation}
\end{theorem}

Again there is a striking similarity between (\ref{eq2}) and (\ref
{eq5}), as well as between (\ref{eq4}) and (\ref{eq6}).
Note that for $\a=\b$ the definition~(\ref{eqZZ}) results in
\[
\tilde Z(x,\a,\a)=\psi'(\a)Z(x,\a)-\psi(\a)Z'(x,\a),
\]
where the differentiation of $Z$ is with respect to the second argument.



We now present results for reflected processes.
Write $\e_x^0$ for the law of $X$ reflected at 0 (from below) and $\e
_x^a$ for the law of $X$ reflected at $a$ from above,
and let $R$ be the regulator at the corresponding barrier.
That is $(X(t),R(t))$ under $\e_x^0$ and under $\e_x^a$ is given by
$(X(t)+(-\underline X(t))^+,(-\underline X(t))^+)$ and
$(X(t)-(\overline X(t)-a)^+,(\overline X(t)-a)^+)$ under $\e_x$, respectively,
where
\begin{eqnarray*}
\underline X(t) &:=& \inf\bigl\{X(s)\dvt0\le s \le t\bigr\},\qquad \overline
X(t):=\sup\bigl\{ X(s)\dvt0\le s \le t\bigr\}.
\end{eqnarray*}

\begin{theorem}\label{thmreflected}
For $a>0,\t,\vt\geq0, x\leq a$ and implicit killing rate $q\geq0$,
we have the following identities for the reflected processes:
%
\begin{eqnarray}
&& \e_x^0 \bigl(\ee^{-\vt R(T_a^+)-\t(X(T_a^+)-a)};T_a^+<\infty\bigr)
\nonumber\\[-8pt]\label{refl1} \\[-8pt]\nonumber
&&\quad =\frac{\l(\vt-\Phi_\l)Z(x,\vt)}{(\Phi_\l+\t)(\psi(\vt
)Z(a,\Phi_\l)-\l Z(a,\vt))}=\frac{\l}{\Phi_\l+\t}\frac{Z(x,\vt
)}{\tilde Z(a,\vt,\Phi_\l)},
\\
&& \e_x^a \bigl(\ee^{-\vt R(T_0^-)+\t X(T_0^-)};T_0^-<\infty
\bigr)
\nonumber\\[-8pt]\label{refl2} \\[-8pt]\nonumber
&&\quad = \frac{\l}{\l-\psi(\t)} \biggl(Z(x,\t)+Z(x,\Phi_\l)
\frac{W(a)\psi(\t)-(\t+\vt)Z(a,\t)}{Z'(a,\Phi_{\lambda})+\vartheta Z(a,\Phi_{\lambda})} \biggr),
\end{eqnarray}
where the derivative of $Z$ is taken with respect to the first argument.
\end{theorem}

Note that $T_a^+$ and $T_0^-$ can be infinite due to the implicit
killing rate $q$. This result for $\l=\infty$ is to be compared with
%
\begin{eqnarray}
&& \e_x^0 \bigl(\ee^{-\vt R(\tau_a^+)};\tau_a^+<\infty\bigr) = \frac
{Z(x,\vt)}{Z(a,\vt)},\label{reflectedclassical1}
\\
&& \e^a_x \bigl(\ee^{-\vt R(\tau_0^-)+\t X(\tau_0^-)};\tau_0^-<
\infty\bigr)
\nonumber\\[-8pt]\label{reflectedclassical2} \\[-8pt]\nonumber
&&\quad = Z(x,\t)+W(x)\frac{W(a)\psi(\t)-(\t+\vt)Z(a,\t)}{W'_+(a)+\vt W(a)},
\end{eqnarray}
where $W'_+$ denotes the right derivative of $W$ (see, e.g.,~\cite
{kyprianou}, Theorem~8.10, for $\vt=0$ and~\cite{ivanovsscale} for the
general case).
Furthermore, letting $a\rightarrow\infty$ in (\ref{refl2}) and
using~(\ref{eqlim}) we obtain~(\ref{eq1}), which provides a nice
check ($\vt$ indeed cancels out).
Similarly, (\ref{refl1}) leads to~(\ref{eq3}) if we put $a=\Delta+x$
and let $x\rightarrow\infty$ (note that~(\ref{eq3}) depends only on
the difference~$\Delta$).

Finally, we note that yet another exit identity for a reflected process
with Poissonian observations can be found in~\cite{poweridentities},
Corollary~6.1.
In particular, letting $\rho_y:=\inf\{t\geq0\dvt R(t)>y\}$ be the first
passage time of $R$, it holds that
%
\begin{equation}
\label{idthi} \p_x^a\bigl(\rho_y<T_0^-
\bigr)=\frac{Z(x,\Phi_\l)}{Z(a,\Phi_{\l})}\exp\biggl(-\frac{Z'(a,\Phi
_{\l})}{Z(a,\Phi_{\l})}y \biggr),
\end{equation}
where $q\geq0$ is implicit and $x\in[0,a]$. 
If $X$ is some surplus process, it is natural to interpret $R(t)$ as
the dividend payments up to time $t$ according to a horizontal dividend
barrier strategy.
Identity~(\ref{idthi}) then allows to obtain the expected discounted
dividends until ruin:
%
\begin{eqnarray}\label{eqdividends1}
\e_x^a\int_0^\infty
\ee^{-q t}\1{t<T_0^-}\,\D R(t) &=&\e_x^a\int
_0^\infty \ee^{-q \rho_y}\1{\rho_y<T_0^-}\,\D y\nonumber
\\
&=&\int_0^\infty\e_x^a
\bigl(\ee^{-q \rho_y};\rho_y<T_0^-\bigr)\,\D y
\nonumber\\[-8pt]\\[-8pt]\nonumber
&=&\frac{Z_q(x,\Phi_{\l+q})}{Z_q(a,\Phi_{\l+q})}\int_0^\infty\exp\biggl(-
\frac{Z_q'(a,\Phi_{\l+q})}{Z_q(a,\Phi_{\l+q})}y \biggr)\,\D y
\\
&=& \frac{Z_q(x,\Phi_{\l+q})}{Z_q'(a,\Phi_{\l+q})}, \nonumber
\end{eqnarray}
where $x\in[0,a]$.
In the case of continuous observations, that is, $\l=\infty$, this
expression reduces to $W_q(x)/W_{q+}'(a)$, see, for example,~\cite
{dividends}, Proposition~2. Also, in the absence of discounting ($q=0$), one
obtains from (\ref{refl2})
for $\t=0$ that
\[
\e_a^a \bigl(\ee^{-\vt R(T_0^-)} \bigr)=Z'(a,\Phi_{\lambda})/\bigl(Z'(a,\Phi_{\lambda})+\vartheta Z(a,\Phi_{\lambda})\bigr),
\]
that is, the distribution of total dividend payments until ruin
(observed at Poissonian times) is exponentially distributed with
parameter $Z'(a,\Phi_{\lambda})/Z(a,\Phi_{\lambda})$, if the initial
surplus level is at the barrier.
The exponential parameter reduces to $W_+'(a)/W(a)$ for $\l\to\infty
$, cf. \cite{dividends}, Section~5.


\begin{remark}
We emphasize again that each of the formulas in Theorems \ref{thm1}--\ref{thmreflected} can also be written for $q> 0$ in an
explicit form, see also~(\ref{eqdividends1}). For instance, (\ref{eq2})
can be read as
\[
\e_x \bigl(\ee^{-qT_0^-+\theta X(T_0^-)};T_0^-<\tau_a^+
\bigr)=\frac
{\lambda}{\lambda-\psi_q(\theta)} \biggl(Z_q(x,\t)-Z_q(x,
\Phi_{\l
+q} )\frac{Z_q(a,\t)}{Z_q(a,\Phi_{\l+q} )} \biggr).
\]
\end{remark}

\section{Proofs}\label{sec5}

Some of the proofs below will rely on the following intriguing identity
first observed in~\cite{loeffenocctimes}, Equation~(6):
%
\begin{equation}
\label{eqscaleconv} (p-q)\int_0^a
W_p(a-x)W_q(x)\,\D x=W_p(a)-W_q(a),
\end{equation}
which as a consequence yields
\[
\int_0^a W_q(a-x)W_q(x)\,
\D x=\frac{\partial W_q(a)}{\partial q}.
\]
The following result generalizes the second part of \cite
{loeffenocctimes}, Equation~(6):

\begin{lemma}\label{lemZrepr}
For $\t,\a,p,q\geq0$, it holds that
\[
(p-q)\int_0^aW_p(a-x)Z_q(x,
\t)\,\D x=Z_p(a,\t)-Z_q(a,\t).
\]
\end{lemma}

\begin{pf}
First, we show that
%
\begin{eqnarray}\label{eqZrept1}
&& (p-q)\int_0^aW_p(a-x)
\int_0^x \ee^{\t(x-y)}W_q(y)\,\D y\,
\D x
\nonumber\\[-8pt]\\[-8pt]\nonumber
&&\quad =\ee^{\t a} \biggl(\int_0^a
\ee^{-\t x}W_p(x)\,\D x-\int_0^a
\ee^{-\t x}W_q(x)\,\D x \biggr)
\end{eqnarray}
by taking transforms of both sides.
The left-hand side gives, for sufficiently large $s$,
\[
\int_0^\infty \ee^{-s a} \biggl((p-q)\int
_0^aW_p(a-x)\int
_0^x \ee^{\t
(x-y)}W_q(y)\,\D y\,\D x
\biggr)\,\D a=\frac{p-q}{(s-\t)\psi_p(s)\psi_q(s)},
\]
and for the right-hand side we have
\[
\int_0^\infty \ee^{-s a}
\biggl(\ee^{\t a}\int_0^a
\ee^{-\t x}W_p(x)\,\D x \biggr)\,\D a=\frac{1}{(s-\t)\psi_p(s)}
\]
and similarly for the second term. Then~(\ref{eqZrept1}) follows by
noting that
\[
\frac{1}{\psi_p(s)}-\frac{1}{\psi_q(s)}=\frac{p-q}{\psi_p(s)\psi_q(s)}.
\]
Finally, using~(\ref{eqZrept1}) we get
\begin{eqnarray*}
&&(p-q)\int_0^aW_p(a-x)Z_q(x,
\t)\,\D x
\\
&&\quad =(p-q)\int_0^a W_p(a-x)
\ee^{\t
x}\,\D x-\psi_q(\t)\ee^{\t a} \biggl(\int
_0^a \ee^{-\t x}W_p(x)\,\D x-
\int_0^a \ee^{-\t x}W_q(x)\,\D x
\biggr)
\\
&&\quad = \ee^{\t a}\bigl(p-q-\psi_q(\t)\bigr)\int
_0^a \ee^{-\t x}W_p(x)\,\D x+\ee^{\t a}\psi_q(\t)\int_0^a
\ee^{-\t x}W_q(x)\,\D x
\\
&&\quad =Z_p(a,\t)-Z_q(a,
\t)
\end{eqnarray*}
finishing the proof.
\end{pf}

\subsection{Proof of Theorem~\texorpdfstring{\protect\ref{thm1}}{3.1}}
We split the proof into several parts.

\begin{pf*}{Proof of Equation~(\ref{eq2})}
Denoting $f(x,\t,a):=\e_x (\ee^{\theta X(T_0^-)};T_0^-<\tau
_a^+ )$ one can write, using the strong Markov property,
\begin{eqnarray*}
f(x,\t,a) &=& \int_{-\infty}^0\p_x\bigl(X
\bigl(\tau_0^-\bigr)\in\D z,\tau_0^-<\tau_a^+
\bigr) \bigl(\p_z\bigl(\tau_0^+<e_\l
\bigr)f(0;\t,a)+\e_z\bigl(\ee^{\theta X(e_\lambda
)},e_\lambda<
\tau_0^+\bigr) \bigr).
\end{eqnarray*}
Recall that $\p_z(\tau_0^+<e_\lambda)=\ee^{\Phi_\l z},z\leq0$ and also
%
\begin{eqnarray}
\label{eqexptime} \e_z\bigl(\ee^{\theta X(e_\lambda)},e_\lambda<
\tau_0^+\bigr)=\e_z \bigl(\ee^{\theta X(e_\lambda)}
\bigr)-\ee^{\Phi_\l z}\e\bigl(\ee^{\theta
X(e_\lambda)} \bigr)=\frac{\lambda}{\lambda-\psi(\theta
)}
\bigl(\ee^{\theta z}-\ee^{\Phi_\l z}\bigr)
\end{eqnarray}
for $\theta$ small enough such that $\psi(\theta)<\lambda$. The
result can then be analytically continued to any $\t\geq0$.
Thus, using~(\ref{eqexit0}) we arrive at
%
\begin{eqnarray}\label{eqtowork}
f(x,\t,a) &=& \biggl(Z(x,\Phi_\l)-Z(a,
\Phi_\l)\frac{W(x)}{W(a)} \biggr) \biggl(f(0,\t,a)-\frac{\lambda
}{\lambda-\psi(\theta)}
\biggr)
\nonumber\\[-8pt]\\[-8pt]\nonumber
&&{} + \biggl(Z(x,\t)-Z(a,\t)\frac{W(x)}{W(a)} \biggr)\frac{\lambda
}{\lambda-\psi(\theta)}.
\end{eqnarray}

Note that due to $Z(0,\t)=1$ we get for $x=0$
\[
Z(a,\Phi_\l)\frac{W(0)}{W(a)}f(0,\t,a)=\frac{\lambda}{\lambda
-\psi(\theta)}
\biggl(Z(a,\Phi_\l)\frac{W(0)}{W(a)}-Z(a,\t)\frac
{W(0)}{W(a)}
\biggr).
\]
This equation is trivial when $W(0)=0$, that is, when $X$ has sample
paths of unbounded variation, see, for example,~\cite{kyprianou},
Equation~(8.26), but otherwise we have
%
\begin{equation}
\label{eqf0} f(0,\t,a)=\frac{\lambda}{\lambda-\psi(\theta)} \biggl
(1-\frac
{Z(a,\t)}{Z(a,\Phi_\l)} \biggr)
\end{equation}
and the result follows combining (\ref{eqtowork}) and (\ref{eqf0}).

It is only left to show that~(\ref{eqf0}) holds also when $X$ has
sample paths of unbounded variation, that is, when $W(0)=0$.
For $x\in(0,a)$, we have
%
\begin{eqnarray}\label{eqf0new}
&& f(0,\t,a)=\p\bigl(\tau_x^+<e_\l\bigr)f(x,
\t,a)+A(x)+B(x), \nonumber
\\
&&\quad\mbox{where } A(x):=\e\bigl(\ee^{\t X(e_\lambda)};e_\lambda<\tau_x^+,X(e_\lambda
)<0\bigr),
\\
&&\quad B(x) := \int_0^x \p\bigl(e_\lambda<
\tau_x^+,X(e_\lambda)\in\D y\bigr)f(y,\t,a).
\nonumber
\end{eqnarray}

It is well known that for $\theta\geq0$
%
\begin{equation}
\label{eqlimW} \int_0^x \ee^{-\t y}W(y)\,\D
y/W(x)\rightarrow0\qquad\mbox{as }x\downarrow0,
\end{equation}
which can be seen by interpreting the ratio of scale functions. In a
similar way one can show, using (\ref{eqscaleconv}), that $W_\l
(x)/W(x)\rightarrow1$ as $x\downarrow0$.
Using (\ref{eqpotential1}), observe that
\[
\p\bigl(e_\lambda<\tau_x^+,X(e_\lambda)\geq0\bigr)=
\l \ee^{-\Phi_\l x}\int_0^x W_\l(y)\,\D y=\mathrm{o}\bigl(W(x)\bigr)
\]
as $x\downarrow0$.
Next, using~(\ref{eqexptime}) observe that $B(x)=\mathrm{o}(W(x))$ and
\begin{eqnarray*}
A(x)&:=& \e\bigl(\ee^{\theta X(e_\lambda)}; e_\lambda<\tau^+_x\bigr) -
\e\bigl(\ee^{\theta X(e_\lambda)}; e_\lambda<\tau^+_x,
X(e_\lambda)\geq0\bigr)
\\
&=&\frac{\lambda}{\lambda-\psi(\theta)}
\bigl(1-\ee^{(\theta-\Phi
_{\lambda})x} \bigr)+\mathrm{o}\bigl(W(x)\bigr).
\end{eqnarray*}

Plugging (\ref{eqf0new}) into (\ref{eqtowork}) and rearranging it
we obtain
%
\begin{eqnarray}\label{eqtotakelim}
&& f(x,\theta,a) \biggl[ 1- \biggl(Z(x,\Phi_{\lambda})-Z(a,
\Phi_{\lambda
})\frac{W(x)}{W(a)} \biggr)\ee^{-\Phi_{\lambda}x} \biggr]\nonumber
\\
&&\quad =-\frac{\lambda}{\lambda-\psi(\theta)}
\nonumber\\[-8pt]\\[-8pt]\nonumber
&&\qquad\hspace*{6pt} {}\times  \biggl(Z(x,\Phi_{\lambda
})\ee^{(\t-\Phi_{\lambda})x} -Z(x,
\theta)+\frac{W(x)}{W(a)}\bigl(Z(a,\theta)-Z(a,\Phi_{\lambda
})\ee^{(\t-\Phi_{\lambda})x}
\bigr) \biggr)
\\
&&\qquad{} +\mathrm{o}\bigl(W(x)\bigr).\nonumber 
\end{eqnarray}

Divide (\ref{eqtotakelim}) by $W(x)$ and take the limit as
$x\downarrow0$, using the representation~(\ref{eqZ}) and then
also~(\ref{eqlimW}), to obtain
%
\[
f(0,\theta,a)Z(a,\Phi_{\lambda})\frac{1}{W(a)}=-\frac{\lambda
}{\lambda-\psi(\theta)}
\frac{1}{W(a)}\bigl(Z(a,\theta)-Z(a,\Phi_{\lambda})\bigr),
\]
which immediately yields~(\ref{eqf0}).
\end{pf*}

\begin{pf*}{Proof of Equation~(\ref{eq4})}
We only need to consider $x\in[0,a]$.
Putting
\[
f(x,\t):=\e_x\bigl(\ee^{-\t(X(T_a^+)-a)};T_a^+<\tau_0^-\bigr)
\]
we write
%
\begin{equation}
\label{eqeq4proof}f(x,\t)=\p_x\bigl(\tau_a^+<\tau
_0^-\bigr)f(a,\t)=\frac{W(x)}{W(a)}f(a,\t).
\end{equation}
%
Using~(\ref{eqpotential2}) and conditioning on the first Poisson
observation time, we get
\begin{eqnarray*}
f(a,\t)=\int_a^\infty \ee^{-\t(x-a)}\l
W_\l(a)\ee^{-\Phi_\l x}\,\D x+\int_0^a
\l\bigl(W_\l(a)\ee^{-\Phi_\l x}-W_\l(a-x)\bigr)f(x,\t)\,\D
x.
\end{eqnarray*}
Using~(\ref{eqeq4proof}), we obtain
\begin{eqnarray*}
&& f(a,\t) \biggl(W(a)-\l W_\l(a)\int_{0}^a
W(x)\ee^{-\Phi_\l x}\,\D x+\l\int_{0}^a
W_\l(x)W(a-x)\,\D x \biggr)
\\
&&\quad =\frac{\l W_\l(a)W(a)}{\Phi_\l
+\t}\ee^{-\Phi_\l a}.
\end{eqnarray*}
With the help of~(\ref{eqscaleconv}), the expression in the brackets
reduces to
\[
W_\l(a) \biggl(1-\l\int_{0}^a
W(x)\ee^{-\Phi_\l x}\,\D x \biggr)=W_\l(a)\ee^{-\Phi_\l a}Z(a,
\Phi_\l),
\]
which shows that $f(a,\t)=\frac{\l}{\Phi_\l+\t}\frac
{W(a)}{Z(a,\Phi_\l)}$, completing the proof in view of~(\ref{eqeq4proof}).
\end{pf*}

\begin{pf*}{Proof of Equations~(\ref{eq1}) and (\ref{eq3})}
Identity~(\ref{eq1}) for $\t>\Phi$ follows immediately from (\ref
{eq2}) and~(\ref{eqlim}); by analytic continuation it is also true
for any $\t\geq0$.
Similarly, (\ref{eq3}) follows from (\ref{eq4}) by plugging in $x+u$
and $a+u$ instead of $x$ and $u$, respectively, letting $u\rightarrow
\infty$
and using~(\ref{eqlim}) together with
\[
\lim_{u\rightarrow\infty}W(x+u)/W(a+u)=\p_x\bigl(
\tau_a^+<\infty\bigr)=\ee^{-\Phi(a-x)}.
\]\upqed
\end{pf*}

\begin{pf*}{Proof of Equation~(\ref{eq7})}
Consider $f(x):=\e_x(\ee^{\t X(\tau_0^-)};\tau_0^-<T_a^+)$, which can
be written as
%
\begin{eqnarray}\label{eqproof7}
f(x)&=& \p_x\bigl(\tau_a^+<
\tau_0^-\bigr)f(a)+\e_x\bigl(\ee^{\t X(\tau_0^-)};\tau
_0^-<\tau_a^+\bigr)
\nonumber\\[-8pt]\\[-8pt]\nonumber
&=& \frac{W(x)}{W(a)}f(a)+Z(x,\t)-W(x)
\frac{Z(a,\t)}{W(a)}
\end{eqnarray}
and also
\[
f(a)=\int_{0}^a\p_a
\bigl(X(e_\l)\in\D x,e_\l<\tau_0^-
\bigr)f(x)+\e_a\bigl(\ee^{\t
X(\tau_0^-)};\tau_0^-<e_\l
\bigr),
\]
where the last term is $Z_\l(a,\t)-W_\l(a)\frac{\psi(\t)-\l}{\t
-\Phi_\l}$ according to~(\ref{eqexit00}).
Hence, we can determine $f(a)$ by plugging in~(\ref{eqproof7}) and
computing the integrals of $W(x)$ and $Z(x,\t)$ with respect to~(\ref
{eqpotential2}).
In particular, using~(\ref{eqscaleconv}) we find
\begin{eqnarray*}
&& \int_{0}^a\p_a
\bigl(X(e_\l)\in\D x,e_\l<\tau_0^-
\bigr)W(x)
\\
&&\quad = \int_0^a\l\bigl(W_\l(a)\ee^{-\Phi_\l x}-W_\l(a-x)
\bigr)W(x)\,\D x
\\
&&\quad =\l W_\l(a)\int_0^a
\ee^{-\Phi_\l x}W(x)\,\D x-\bigl(W_\l(a)-W(a)\bigr)
\\
&&\quad = W(a)-Z(a,
\Phi_\l)\ee^{-\Phi_\l a}W_\l(a).
\end{eqnarray*}
Next we compute
\begin{eqnarray*}
&& \int_0^a\mathrm{e}^{-\Phi_\l x}Z(x,\t)\,\D x
\\
&&\quad =
\frac{1}{\t-\Phi_\l} \bigl(\ee^{(\t
-\Phi_\l)a}-1\bigr)-\frac{\psi(\t)}{\t-\Phi_\l}
\biggl(\ee^{(\t-\Phi
_\l)a}\int_0^a\mathrm{e}^{-\t y}W(y)\,
\D y-\int_0^a\mathrm{e}^{-\Phi_\l y}W(y)\,\D y \biggr)
\\
&&\quad =\frac{1}{\t-\Phi_\l} \biggl(\ee^{-\Phi_\l a}Z(a,\t)-1+\psi(\t)\int
_0^a\mathrm{e}^{-\Phi_\l y}W(y)\,\D y \biggr),
\end{eqnarray*}
which together with Lemma~\ref{lemZrepr} implies that
\begin{eqnarray*}
T &:=& \int_{0}^a\p_a
\bigl(X(e_\l)\in\D y,e_\l<\tau_0^-\bigr)Z(y,
\t)
\\
&=&\frac{\l W_\l(a)}{\t-\Phi_\l} \biggl(\ee^{-\Phi_\l a}Z(a,\t)-1+\psi(\t
)\int
_0^a\mathrm{e}^{-\Phi_\l y}W(y)\,\D y
\biggr)-Z_\l(a,\t)+Z(a,\t).
\end{eqnarray*}
This finally yields
\begin{eqnarray*}
f(a)&=& \biggl(1-Z(a,\Phi_\l)\ee^{-\Phi_\l a}\frac{W_\l
(a)}{W(a)}
\biggr)f(a)+T- \biggl(1-Z(a,\Phi_\l)\ee^{-\Phi_\l a}
\frac
{W_\l(a)}{W(a)} \biggr)Z(a,\t)
\\
&&{}+Z_\l(a,\t)-W_\l(a)\frac{\psi(\t)-\l}{\t-\Phi_\l},
\end{eqnarray*}
which reduces to
%
\begin{eqnarray*}
&& \biggl(Z(a,\Phi_\l)\ee^{-\Phi_\l a}\frac{W_\l(a)}{W(a)} \biggr)f(a)
\\
&&\quad =\frac{W_\l(a)}{\t-\Phi_\l} \bigl(\l \ee^{-\Phi_\l a}Z(a,\t)-\psi(
\t)\ee^{-\Phi_\l a}Z(a,\Phi_\l) \bigr)+Z(a,\Phi_\l)\ee^{-\Phi_\l
a}
\frac{W_\l(a)}{W(a)}Z(a,\t),
\end{eqnarray*}
and hence
\[
f(a)=Z(a,\t)-\frac{W(a)}{\t-\Phi_\l} \biggl(\psi(\t)-\l\frac
{Z(a,\t)}{Z(a,\Phi_\l)} \biggr).
\]
Now the result follows from~(\ref{eqproof7}).
\end{pf*}

\subsection{Proof of Theorem \texorpdfstring{\protect\ref{thm2}}{3.2}}\label{sec6}
Using~(\ref{eqZ}) and changing the order of integration, we can show that
\[
(\a-\beta)\int_0^a\mathrm{e}^{-\a x}Z(x,\b)\,\D
x=1+\bigl(\ee^{-\a a}Z(a,\a)-1\bigr)\psi(\beta)/\psi(\a)-\ee^{-\a a}Z(a,
\beta),
\]
and hence $\tilde Z$ has an alternative representation
\[
\tilde Z(a,\a,\b)=\ee^{\a a}\frac{\psi(\a)-\psi(\b)}{\a-\b}-\psi(\a)\int
_0^a\mathrm{e}^{\a(a-x)}Z(x,\b)\,\D x.
\]
Plugging in $\a=\Phi_\l$ and $\b=\t$, we obtain
%
\begin{equation}
\label{eqZZalt2}\l\int_0^a\mathrm{e}^{-\Phi_\l x}Z(x,\t
)\,\D x=\frac{\psi(\t)-\l}{\t-\Phi_\l}-\ee^{-\Phi_\l a}\tilde Z(a,\Phi_\l,\t).
\end{equation}

\begin{pf*}{Proof of Equation~(\ref{eq5})}
Defining $f(x,\t):=\e_x (\ee^{\theta X(T_0^-)};T_0^-<T_a^+
)$ for $x\leq a$, we write
\[
f(x,\t)=\e_x \bigl(\ee^{\theta X(T_0^-)};T_0^-<
\tau_a^+ \bigr)+\p_x\bigl(\tau_a^+<T_0^-
\bigr)f(a,\t).
\]
Plugging in the corresponding identities, we first get for $x=0$ that
\[
f(0,\t)=\frac{\lambda}{\lambda-\psi(\theta)} \biggl(1-\frac
{Z(a,\t)}{Z(a,\Phi_\l)} \biggr)+
\frac{f(a,\t)}{Z(a,\Phi_\l)},
\]
and some simplifications yield
%
\begin{equation}
\label{eqproof1} f(x,\t)=\frac{\lambda}{\lambda-\psi(\theta)}\bigl
(Z(x,\t)-Z(x,\Phi_\l)
\bigr)+f(0,\t)Z(x,\Phi_\l).
\end{equation}

Using~(\ref{eqpotential}) and conditioning on the first observation
epoch, we get
\[
f(0,\t)=\frac{\l}{\psi'(\Phi_\l)}\int_0^a\mathrm{e}^{-\Phi_\l x}f(x,
\t)\,\D x+\int_{-\infty}^0\ee^{\t x}\p
\bigl(X(e_\l)\in\D x\bigr),
\]
where the latter term evaluates to $\frac{\l}{\l-\psi(\t)}+\frac
{\l}{\psi'(\Phi_\l)(\t-\Phi_\l)}$.
Plugging in (\ref{eqproof1}), we get
\begin{eqnarray*}
&&f(0,\t) \biggl(1-\frac{\l}{\psi'(\Phi_\l)}\int_0^a\mathrm{e}^{-\Phi_\l
x}Z(x,
\Phi_\l)\,\D x \biggr)
\\
&&\quad =\frac{\l}{\psi'(\Phi_\l)}\frac{\l}{\l-\psi(\t)}\int_0^a\mathrm{e}^{-\Phi_\l x}
\bigl(Z(x,\t)-Z(x,\Phi_\l)\bigr)\,\D x+\frac{\l}{\l-\psi
(\t)}+
\frac{\l}{\psi'(\Phi_\l)(\t-\Phi_\l)}.
\end{eqnarray*}
Using (\ref{eqZZalt2}) this reduces to
\begin{eqnarray*}
&& f(0,\t)\ee^{-\Phi_\l a}\tilde Z(a,\Phi_\l,\Phi_\l)/
\psi'(\Phi_\l)
\\
&&\quad =\ee^{-\Phi_\l a}\frac{\l}{\psi'(\Phi_\l)(\l-\psi(\t
))}\bigl(
\tilde Z(a,\Phi_\l,\Phi_\l)-\tilde Z(a,
\Phi_\l,\t)\bigr),
\end{eqnarray*}
which readily leads to
\[
f(0,\t)=\frac{\l}{\l-\psi(\t)} \biggl(1-\frac{\tilde Z(a,\Phi_\l,\t
)}{\tilde Z(a,\Phi_\l,\Phi_\l)} \biggr)
\]
and then the result follows from~(\ref{eqproof1}).\vadjust{\goodbreak}
\end{pf*}

\begin{pf*}{Proof of Equation~(\ref{eq6})}
We write for $x\leq a$ that
\begin{eqnarray*}
\e_x \bigl(\ee^{-\t(X(T_a^+)-a)} \bigr)&=& \e_x
\bigl(\ee^{-\t
(X(T_a^+)-a)};T_a^+<T_0^-\bigr)
\\
&&{}+\int
_{-\infty}^0\p_x\bigl(X
\bigl(T_0^-\bigr)\in\D y,T_0^-<T_a^+\bigr)
\e_y \bigl(\ee^{-\t(X(T_a^+)-a)} \bigr).
\end{eqnarray*}
Using~(\ref{eq3}), we obtain
\[
\e_x\bigl(\ee^{-\t(X(T_a^+)-a)};T_a^+<T_0^-
\bigr)=\frac{\Phi_\l-\Phi}{\Phi
_\l+\theta}\ee^{-\Phi a}\bigl(\ee^{\Phi x}-\e_x
\bigl(\ee^{\Phi X(T_0^-)};T_0^-<T_a^+\bigr)\bigr).
\]
With (\ref{eqZZ}), we see that $\tilde Z(a,\Phi_\l,\Phi)=\frac{\l
}{\Phi_\l-\Phi}\ee^{\Phi a}$ and then it follows from~(\ref{eq5}) that
\[
\e_x\bigl(\ee^{\Phi X(T_0^-)};T_0^-<T_a^+
\bigr)=\ee^{\Phi x}-Z(x,\Phi_\l)\frac{\l
}{\Phi_\l-\Phi}\ee^{\Phi a}/
\tilde Z(a,\Phi_\l,\Phi_\l),
\]
which completes the proof.
\end{pf*}

\subsection{Proof of Theorem \texorpdfstring{\protect\ref{thmreflected}}{3.3}}\label
{secreflected}
\begin{pf*}{Proof of Equation~(\ref{refl1})}
Let $f(x):=\e_x^0(\ee^{-\vt R(T_a^+)-\t(X(T_a^+)-a)};T_a^+<\infty)$, then
\[
f(x)=\e_x^0\bigl(\ee^{-\vt R(\tau_a^+)};\tau_a^+<
\infty\bigr)f(a)=\frac
{Z(x,\vt)}{Z(a,\vt)}f(a)
\]
according to~(\ref{reflectedclassical1}), and also
\[
f(a)=\e_a\bigl(\ee^{-\t(X(T_a^+)-a)};T_a^+<
\tau_0^-\bigr)+\e_a\bigl(\ee^{\vt X(\tau
_0^-)};
\tau_0^-<T_a^+\bigr)f(0),
\]
which is equal to $Z(a,\vt)f(0)$.
Plugging in~(\ref{eq4}) and~(\ref{eq7})
we solve for $f(0)$:
\[
f(0)=\frac{\l(\vt-\Phi_\l)}{(\Phi_\l+\t)(\psi(\vt)Z(a,\Phi
_\l)-\l Z(a,\vt))}
\]
and the result follows.
\end{pf*}

\begin{pf*}{Proof of Equation~(\ref{refl2})}
Let $f(x)=\e_x^a(\ee^{-\vt R(T_0^-)+\t X(T_0^-)};T_0^-<\infty)$, then
%
\begin{equation}
\label{refl2start} f(x)=\e_x\bigl(\ee^{\t X(T_0^-)};T_0^-<
\tau_a^+\bigr)+\p_x\bigl(\tau_a^+<T_0^-
\bigr)f(a),
\end{equation}
which using~(\ref{eq2}) and~(\ref{eqrisk}) yields
%
\begin{equation}
\label{eqreflected0} f(0)=\frac{\l}{\l-\psi(\t)} \biggl(1-\frac{Z(a,\t
)}{Z(a,\Phi_\l
)} \biggr)+
\frac{f(a)}{Z(a,\Phi_\l)}.
\end{equation}
Also
\begin{eqnarray*}
f(a)=\int_{-\infty}^0 \bigl(\e^a_a
\bigl(\ee^{-\vt R(\tau_0^-)};X\bigl(\tau_0^-\bigr)\in\D z,
\tau_0^-<\infty\bigr) \bigl(\ee^{\Phi_\l z}f(0)+\e_z
\bigl(\ee^{\t X(e_\l
)};e_\l<\tau_0^+\bigr)\bigr) \bigr).
\end{eqnarray*}
Using (\ref{eqexptime}), we arrive at
%
\begin{eqnarray}\label{proofreflected}
f(a) &=&\e^a_a\bigl(\ee^{-\vt R(\tau_0^-)+\Phi_\l
X(\tau_0^-)}
\bigr)f(0)
\nonumber\\[-8pt]\\[-8pt]\nonumber
&&{} +\frac{\l}{\l-\psi(\t)} \bigl(\e^a_a
\bigl(\ee^{-\vt R(\tau_0^-)
 +\t X(\tau
_0^-)}\bigr)-\e^a_a\bigl(\ee^{-\vt R(\tau_0^-)+\Phi_\l X(\tau_0^-)}
\bigr) \bigr).
\end{eqnarray}
Substituting~(\ref{reflectedclassical2}) and (\ref{eqreflected0})
into (\ref{proofreflected}), we obtain after some simplifications
\begin{eqnarray*}
&& f(a) \biggl((\Phi_\l+\vt)-\l\frac{W(a)}{Z(a,\Phi_\l)} \biggr)
\\
&&\quad =
\frac{\l}{\l-\psi(\t)} \biggl(W(a) \biggl(\psi(\t)-\l\frac{Z(a,\t
)}{Z(a,\Phi_\l)}\biggr)+(
\Phi_\l-\t)Z(a,\t) \biggr),
\end{eqnarray*}
which yields the result after plugging $f(a)$ into~(\ref{refl2start})
and yet another round of simplifications.
\end{pf*}

The above proofs mostly rely on the strong Markov property and various
identities from fluctuation theory. We note that often there are
several possibilities to approach a problem,
but some of them may require significantly more effort to obtain a
simple formula resembling the classical case.
One could, for instance, consider using exit theory of random walks for
a purely Poissonian observation.
This approach builds upon some general formulas, see, for
example,~\cite{doneyovershoot}, Theorem~4, ignoring the crucial
assumption of one-sided jumps.
Consequently, one loses structure, making it hard to rewrite these
formulas in terms of scale functions.
Martingale techniques, as used, for example, in~\cite{avramexitlevy}
to obtain some classical exit identities, also do not seem to be
immediately appropriate for our setting.
Moreover, one needs to guess the right martingale and for this one
typically needs to know already the resulting expression.
Finally, independent exponential inter-observation times may suggest
using Wiener--Hopf factorization, exploited, for example, in~\cite
{kuznetsovMClevy} to design simulation algorithms.
Indeed, this factorization is one way to prove~(\ref
{eqpotential})--(\ref{eqpotential2}), which are building blocks of
our results.

\section{Cram\'er--Lundberg risk model with exponential claims}\label{sec7}
As mentioned in Section~\ref{intro}, one application area for
identities of the above type is the ruin analysis for an insurance
portfolio with surplus value $X(t)=x+ct-S(t)$ at time $t$,
where $x\geq0$ is the initial capital and $c>0$ is a constant premium
intensity. The classical Cram\'er--Lundberg risk model in this context
assumes $S(t)$ to be a compound Poisson process,
where independent and identically distributed claims arrive according
to a homogeneous Poisson process with rate $\nu$ (see, e.g., \cite
{AA}). Assume now that claims are $\operatorname{Exp}(\eta$)
distributed. Then
\[
\psi_q(\t)=c\t-\nu\bigl(1-\e\bigl(\ee^{-\t e_\eta}\bigr)\bigr)-q=c
\t-\frac{\nu\t
}{\t+\eta}-q.
\]
If either $q>0$ or $c-\nu/\eta\neq0$ (the usual safety loading
condition is $c-\nu/\eta>0$), then the scale function has the form
\[
W_q(x)=u_q\ee^{\Phi_q x}-v_q\ee^{-R_q x},
\]
where $u_q,v_q>0$ and $R_q,\Phi_q\geq0$ (not simultaneously 0).
Moreover, $-R_q$ and $\Phi_q$ are the two roots of $\psi_q(\t)=0$,
see Figure~\ref{fig}
(note that $R_0$ is the classical Lundberg adjustment coefficient),
and
\[
\frac{u_q}{\t-\Phi_q}-\frac{v_q}{\t+R_q}=\frac{1}{\psi_q(\t)}
\]
yielding $u_q=1/\psi_q'(\Phi_q)=\Phi'_q, v_q=-1/\psi_q'(-R_q)=R'_q$.

\begin{figure}[t]

\includegraphics{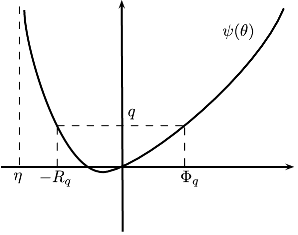}

\caption{The function $\psi(\theta)$ and the inverses $R_q$ and
$\Phi_q$.}\vspace*{-6pt}\label{fig}
\end{figure}

So we also
obtain
\begin{eqnarray*}
Z_q(x,\t)&=&\ee^{\t x} \biggl(1-\psi_q(\t) \biggl(
\frac{u_q}{\Phi_q-\t
}\bigl(\ee^{(\Phi_q-\t) x}-1\bigr)-\frac{v_q}{-R_q-\t}
\bigl(\ee^{(-R_q-\t)
x}-1\bigr) \biggr) \biggr)
\\
&=&\psi_q(\t) \biggl(\frac{u_q\ee^{\Phi_qx}}{\t-\Phi_q}-\frac{v_q
\ee^{-R_q x}}{\t+R_q} \biggr)
\\
&=& \frac{\psi_q(\t)\Phi'_q}{\t-\Phi
_q}\bigl(\ee^{\Phi_qx}-\ee^{-R_qx}\bigr)+\ee^{-R_q x}.
\end{eqnarray*}

Consider~(\ref{eq1}), which for the present model immediately
simplifies to
\begin{eqnarray*}
\e_x \bigl(\ee^{-q T_0^-+\theta X(T_0^-)};T_0^-<\infty
\bigr) &=& \ee^{-R_q
x}\frac{\l-\psi_q(\t) (({\Phi_q-\Phi_{\l+q}})/({\Phi_q-\t}))}{\l
-\psi_q(\t)}
\\[-1pt]
&=& \ee^{-R_q x} \biggl(1-
\frac{\psi_q(\t)}{\psi_{\l+q}(\t
)}\frac{\Phi_{\l+q}-\t}{\Phi_q-\t} \biggr).
\end{eqnarray*}
Since $\psi_{\l+q}(\t)\frac{\t+\eta}{c}=(\t+R_{\l+q})(\t-\Phi
_{\l+q})$, we get
\[
\e_x \bigl(\ee^{-q T_0^-+\theta X(T_0^-)};T_0^-<\infty
\bigr)=\ee^{-R_q
x}\biggl(1-\frac{\t+R_{q}}{\t+R_{\l+q}}\biggr)=\ee^{-R_q x}
\frac{R_{\l
+q}-R_q}{R_{\l+q}+\t},\vadjust{\goodbreak}
\]
which agrees with the result of \cite{actS}, Example~4.2 (take an
exponential penalty function $w_2(y)=\ee^{-\t y}$ for the overshoot).
Note also that $R_{\l+q}\rightarrow\eta$ as $\l\rightarrow\infty
$, because $\t=-\eta$ is the asymptote of $\psi_q(\t)$.
Hence, we also retain the classical formula for the Laplace transform
of the (discounted) ruin deficit under continuous observation
\[
\e_x \bigl(\ee^{-q \tau_0^-+\theta X(\tau_0^-)};\tau_0^-<\infty
\bigr)=\ee^{-R_q x}\frac{\eta-R_q}{\eta+\t},
\]
cf. \cite{GS98}, Equation~(5.42), which can alternatively be obtained
using a
direct argument (exchange the meaning of claims and interarrivals).

Next, identity (\ref{eq2}) simplifies to
\begin{eqnarray*}
&& \e_x \bigl(\ee^{-q T_0^-+\theta X(T_0^-)};T_0^-<\tau_a^+
\bigr)
\\
&&\quad  = \frac
{-\l}{\psi_{\l+q}(\t)}\frac{(\ee^{-R_q a+\Phi_q x}-\ee^{\Phi_q a-R_q
x})({\psi_q(\t)\Phi'_q}/({\t-\Phi_q})-{\l\Phi'_q}/({\Phi
_{\l+q}-\Phi_q}))}{({\l\Phi'_q}/({\Phi_{\l+q}-\Phi_q}))(\ee^{\Phi
_qa}-\ee^{-R_qa})+\ee^{-R_q a}}
\\
&&\quad = \frac{R_{\l+q}-R_q}{R_{\l+q}+\t}\frac{\ee^{\Phi_q
a+R_q(a-x)}-\ee^{\Phi_q x}}{\ee^{\Phi_q a+R_q a}-1+ ({\Phi_{\l
+q}-\Phi_q})/{\l\Phi'_q}}.
\end{eqnarray*}
It is not hard to see that $(\Phi_{\l+q}-\Phi_q)/\l\rightarrow1/c$
as $\lambda\rightarrow\infty$, and hence we have
\begin{eqnarray*}
\e_x \bigl(\ee^{-q \tau_0^-+\theta X(\tau_0^-)};\tau_0^-<\tau
_a^+ \bigr) &=& \frac{\eta-R_q}{\eta+\t}\frac{\ee^{\Phi_q
a+R_q(a-x)}-\ee^{\Phi_q x}}{\ee^{\Phi_q a+R_q a}-1+\psi_q'(\Phi_q)/c}
\\
&=&  \frac{\eta-R_q}{\eta+\t}
\frac{\ee^{\Phi_q a+R_q(a-x)}-\ee^{\Phi_q
x}}{\ee^{\Phi_q a+R_q a}+({R_q-\eta})/({\Phi_q+\eta})},
\end{eqnarray*}
where the last equality follows from the observation that $\frac{\psi
_q(\t)}{\t-\Phi_q}=c\frac{\t+R_q}{\t+\eta}$ and hence $\psi
_q'(\Phi_q)=c\frac{\Phi_q+R_q}{\Phi_q+\eta}$.\vspace*{1pt}

Finally, (\ref{eqdividends1}) provides a formula for the expected
(continuously) discounted dividends until ruin:
%
\begin{eqnarray}\label{diess}
\e_x^a \biggl(\int_0^\infty \ee^{-q t}\1{t<T_0^-}\,\D R(t) \biggr)
&=& \frac{(\lambda\Phi'_q/({\Phi_{\l
+q}-\Phi_q}))(\ee^{\Phi_q
x}-\ee^{-R_q x})+\ee^{-R_q x}}{((\lambda\Phi'_q/(\Phi_{\l+q}-\Phi
_q))(\Phi_q\ee^{\Phi_q a}+R_q\ee^{-R_q a})-R_q\ee^{-R_q a}}\nonumber
\\
&=& \frac{\ee^{\Phi_q x}+\ee^{-R_q x} (({\Phi_{\l+q}-\Phi
_q})/{\lambda\Phi'_q}-1 )}{\Phi_q\ee^{\Phi_q a}-R_q\ee^{-R_q
a} (({\Phi_{\l+q}-\Phi_q})/{\lambda\Phi'_q}-1 )}
\\
&=& \frac{(R_{\l+q}+\Phi
_q)\ee^{\Phi_q x}-(R_{\l+q}-R_q)\ee^{-R_q
x}}{(R_{\l+q}+\Phi_q)\Phi_q\ee^{\Phi_q a}+R_q(R_{\l+q}-R_q)\ee^{-R_q a}},\nonumber
\end{eqnarray}
because $\frac{\Phi_{\l+q}-\Phi_q}{\l\Phi'_q}=\frac{\Phi
_q+R_q}{\Phi_q+R_{\l+q}}$.
This expression is similar to~\cite{dividendsrandomized},
Equation~(24), but
not identical, because there the dividends are paid at Poissonian
times only (see also \cite{avanzi} for a spectrally positive model
setup). Identity (\ref{diess}) is, however, the analogue of \cite{AGS},
Equation~(20), where it was derived for a diffusion process $X(t)$.


\section*{Acknowledgements}
We would like to thank two anonymous referees for helpful remarks
concerning the presentation of this paper.

H. Albrecher and J. Ivanovs are supported by the Swiss National Science
Foundation Project 200020\_143889,
and X. Zhou is supported by an NSERC grant.


%

\printhistory

\begin{thebibliography}{21}
\bibitem{dividendsrandomized}
%
\begin{barticle}[mr]
\bauthor{\bsnm{Albrecher},~\bfnm{Hansj{\"o}rg}\binits{H.}},
\bauthor{\bsnm{Cheung},~\bfnm{Eric~C.~K.}\binits{E.C.K.}} \AND
\bauthor{\bsnm{Thonhauser},~\bfnm{Stefan}\binits{S.}}
(\byear{2011}).
\btitle{Randomized observation periods for the compound {P}oisson risk
model dividends}.
\bjournal{Astin Bull.}
\bvolume{41}
\bpages{645--672}.
\bid{issn={0515-0361}, mr={2858789}}
\end{barticle}
%

\bptok{imsref}%
\endbibitem

\bibitem{actS}
%
\begin{barticle}[mr]
\bauthor{\bsnm{Albrecher},~\bfnm{Hansj{\"o}rg}\binits{H.}},
\bauthor{\bsnm{Cheung},~\bfnm{Eric~C.~K.}\binits{E.C.K.}} \AND
\bauthor{\bsnm{Thonhauser},~\bfnm{Stefan}\binits{S.}}
(\byear{2013}).
\btitle{Randomized observation periods for the compound {P}oisson risk
model: The discounted penalty function}.
\bjournal{Scand. Actuar. J.}
\bvolume{6}
\bpages{424--452}.
\bid{doi={10.1080/03461238.2011.624686}, issn={0346-1238}, mr={3176013}}
\end{barticle}
%

\bptok{imsref}%
\endbibitem

\bibitem{AGS}
%
\begin{barticle}[mr]
\bauthor{\bsnm{Albrecher},~\bfnm{Hansj{\"o}rg}\binits{H.}},
\bauthor{\bsnm{Gerber},~\bfnm{Hans~U.}\binits{H.U.}} \AND
\bauthor{\bsnm{Shiu},~\bfnm{Elias~S.~W.}\binits{E.S.W.}}
(\byear{2011}).
\btitle{The optimal dividend barrier in the Gamma-Omega model}.
\bjournal{Eur. Actuar. J.}
\bvolume{1}
\bpages{43--55}.
\bid{doi={10.1007/s13385-011-0006-4}, issn={2190-9733}, mr={2843466}}
\end{barticle}
%

\bptok{imsref}%
\endbibitem

\bibitem{albrecherivanovsrisks}
%
\begin{barticle}[auto:parserefs-M02]
\bauthor{\bsnm{Albrecher},~\bfnm{H.}\binits{H.}} \AND
\bauthor{\bsnm{Ivanovs},~\bfnm{J.}\binits{J.}}
(\byear{2013}).
\btitle{A risk model with an observer in a {M}arkov environment}.
\bjournal{Risks}
\bvolume{1}
\bpages{148--161}.
\end{barticle}
%

\bptok{imsref}%
\endbibitem

\bibitem{poweridentities}
%
\begin{barticle}[auto:parserefs-M02]
\bauthor{\bsnm{Albrecher},~\bfnm{H.}\binits{H.}} \AND
\bauthor{\bsnm{Ivanovs},~\bfnm{J.}\binits{J.}}
(\byear{2014}).
\btitle{Power identities for L\'evy risk models under taxation and
capital injections}.
\bjournal{Stoch. Syst.}
\bvolume{4}
\bpages{157--172}.
\end{barticle}
%

\bptok{imsref}%
\endbibitem

\bibitem{AL}
%
\begin{barticle}[auto:parserefs-M02]
\bauthor{\bsnm{Albrecher},~\bfnm{H.}\binits{H.}} \AND
\bauthor{\bsnm{Lautscham},~\bfnm{V.}\binits{V.}}
(\byear{2013}).
\btitle{From ruin to bankruptcy for compound {P}oisson surplus processes}.
\bjournal{Astin Bull.}
\bvolume{43}
\bpages{213--243}.
\end{barticle}
%

\bptok{imsref}%
\endbibitem

\bibitem{AA}
%
\begin{bbook}[mr]
\bauthor{\bsnm{Asmussen},~\bfnm{S{\o}ren}\binits{S.}} \AND
\bauthor{\bsnm{Albrecher},~\bfnm{Hansj{\"o}rg}\binits{H.}}
(\byear{2010}).
\btitle{Ruin Probabilities},
\bedition{2nd} ed.
\bseries{Advanced Series on Statistical Science \& Applied Probability}
\bvolume{14}.
\blocation{Hackensack, NJ}:
\bpublisher{World Scientific}.
\bid{doi={10.1142/9789814282536}, mr={2766220}}
\end{bbook}
%

\bptok{imsref}%
\endbibitem

\bibitem{avanzi}
%
\begin{barticle}[mr]
\bauthor{\bsnm{Avanzi},~\bfnm{Benjamin}\binits{B.}},
\bauthor{\bsnm{Cheung},~\bfnm{Eric~C.~K.}\binits{E.C.K.}},
\bauthor{\bsnm{Wong},~\bfnm{Bernard}\binits{B.}} \AND
\bauthor{\bsnm{Woo},~\bfnm{Jae-Kyung}\binits{J.-K.}}
(\byear{2013}).
\btitle{On a periodic dividend barrier strategy in the dual model with
continuous monitoring of solvency}.
\bjournal{Insurance Math. Econom.}
\bvolume{52}
\bpages{98--113}.
\bid{doi={10.1016/j.insmatheco.2012.10.008}, issn={0167-6687}, mr={3023659}}
\end{barticle}
%

\bptok{imsref}%
\endbibitem

\bibitem{avramexitlevy}
%
\begin{barticle}[mr]
\bauthor{\bsnm{Avram},~\bfnm{F.}\binits{F.}},
\bauthor{\bsnm{Kyprianou},~\bfnm{A.~E.}\binits{A.E.}} \AND
\bauthor{\bsnm{Pistorius},~\bfnm{M.~R.}\binits{M.R.}}
(\byear{2004}).
\btitle{Exit problems for spectrally negative L\'evy processes and
applications to ({C}anadized) {R}ussian options}.
\bjournal{Ann. Appl. Probab.}
\bvolume{14}
\bpages{215--238}.
\bid{doi={10.1214/aoap/1075828052}, issn={1050-5164}, mr={2023021}}
\end{barticle}
%

\bptok{imsref}%
\endbibitem

\bibitem{bekker}
%
\begin{barticle}[mr]
\bauthor{\bsnm{Bekker},~\bfnm{R.}\binits{R.}},
\bauthor{\bsnm{Boxma},~\bfnm{O.~J.}\binits{O.J.}} \AND
\bauthor{\bsnm{Resing},~\bfnm{J.~A.~C.}\binits{J.A.C.}}
(\byear{2009}).
\btitle{L\'evy processes with adaptable exponent}.
\bjournal{Adv. in Appl. Probab.}
\bvolume{41}
\bpages{177--205}.
\bid{doi={10.1239/aap/1240319581}, issn={0001-8678}, mr={2514950}}
\end{barticle}
%

\bptok{imsref}%
\endbibitem

\bibitem{bertoinexpdecay}
%
\begin{barticle}[mr]
\bauthor{\bsnm{Bertoin},~\bfnm{Jean}\binits{J.}}
(\byear{1997}).
\btitle{Exponential decay and ergodicity of completely asymmetric L\'
evy processes in a finite interval}.
\bjournal{Ann. Appl. Probab.}
\bvolume{7}
\bpages{156--169}.
\bid{doi={10.1214/aoap/1034625257}, issn={1050-5164}, mr={1428754}}
\end{barticle}
%

\bptok{imsref}%
\endbibitem

\bibitem{doneyovershoot}
%
\begin{barticle}[mr]
\bauthor{\bsnm{Doney},~\bfnm{R.~A.}\binits{R.A.}} \AND
\bauthor{\bsnm{Kyprianou},~\bfnm{A.~E.}\binits{A.E.}}
(\byear{2006}).
\btitle{Overshoots and undershoots of L\'evy processes}.
\bjournal{Ann. Appl. Probab.}
\bvolume{16}
\bpages{91--106}.
\bid{doi={10.1214/105051605000000647}, issn={1050-5164}, mr={2209337}}
\end{barticle}
%

\bptok{imsref}%
\endbibitem

\bibitem{GS98}
%
\begin{barticle}[mr]
\bauthor{\bsnm{Gerber},~\bfnm{Hans~U.}\binits{H.U.}} \AND
\bauthor{\bsnm{Shiu},~\bfnm{Elias~S.~W.}\binits{E.S.W.}}
(\byear{1998}).
\btitle{On the time value of ruin}.
\bjournal{N. Am. Actuar. J.}
\bvolume{2}
\bpages{48--78}.
\bid{doi={10.1080/10920277.1998.10595671}, issn={1092-0277}, mr={1988433}}
\bptnote{check related}%
\end{barticle}
%

\bptok{imsref}%
\endbibitem

\bibitem{ivanovskilling}
%
\begin{barticle}[mr]
\bauthor{\bsnm{Ivanovs},~\bfnm{Jevgenijs}\binits{J.}}
(\byear{2013}).
\btitle{A note on killing with applications in risk theory}.
\bjournal{Insurance Math. Econom.}
\bvolume{52}
\bpages{29--34}.
\bid{doi={10.1016/j.insmatheco.2012.10.005}, issn={0167-6687}, mr={3002915}}
\end{barticle}
%

\bptok{imsref}%
\endbibitem

\bibitem{ivanovsscale}
%
\begin{barticle}[mr]
\bauthor{\bsnm{Ivanovs},~\bfnm{Jevgenijs}\binits{J.}} \AND
\bauthor{\bsnm{Palmowski},~\bfnm{Zbigniew}\binits{Z.}}
(\byear{2012}).
\btitle{Occupation densities in solving exit problems for {M}arkov
additive processes and their reflections}.
\bjournal{Stochastic Process. Appl.}
\bvolume{122}
\bpages{3342--3360}.
\bid{doi={10.1016/j.spa.2012.05.016}, issn={0304-4149}, mr={2946445}}
\end{barticle}
%

\bptok{imsref}%
\endbibitem

\bibitem{kuznetsovMClevy}
%
\begin{barticle}[mr]
\bauthor{\bsnm{Kuznetsov},~\bfnm{A.}\binits{A.}},
\bauthor{\bsnm{Kyprianou},~\bfnm{A.~E.}\binits{A.E.}},
\bauthor{\bsnm{Pardo},~\bfnm{J.~C.}\binits{J.C.}} \AND
\bauthor{\bparticle{van} \bsnm{Schaik},~\bfnm{K.}\binits{K.}}
(\byear{2011}).
\btitle{A {W}iener--{H}opf {M}onte {C}arlo simulation technique for L\'
evy processes}.
\bjournal{Ann. Appl. Probab.}
\bvolume{21}
\bpages{2171--2190}.
\bid{doi={10.1214/10-AAP746}, issn={1050-5164}, mr={2895413}}
\end{barticle}
%

\bptok{imsref}%
\endbibitem

\bibitem{kyprianou}
%
\begin{bbook}[mr]
\bauthor{\bsnm{Kyprianou},~\bfnm{Andreas~E.}\binits{A.E.}}
(\byear{2006}).
\btitle{Introductory Lectures on Fluctuations of L\'evy Processes with
Applications}.
\bseries{Universitext}.
\blocation{Berlin}:
\bpublisher{Springer}.
\bid{mr={2250061}}
\end{bbook}
%

\bptok{imsref}%
\endbibitem

\bibitem{landriaultoccupation}
%
\begin{barticle}[mr]
\bauthor{\bsnm{Landriault},~\bfnm{David}\binits{D.}},
\bauthor{\bsnm{Renaud},~\bfnm{Jean-Fran{\c{c}}ois}\binits{J.-F.}}
\AND
\bauthor{\bsnm{Zhou},~\bfnm{Xiaowen}\binits{X.}}
(\byear{2011}).
\btitle{Occupation times of spectrally negative L\'evy processes with
applications}.
\bjournal{Stochastic Process. Appl.}
\bvolume{121}
\bpages{2629--2641}.
\bid{doi={10.1016/j.spa.2011.07.008}, issn={0304-4149}, mr={2832417}}
\end{barticle}
%

\bptok{imsref}%
\endbibitem

\bibitem{loeffenocctimes}
%
\begin{barticle}[mr]
\bauthor{\bsnm{Loeffen},~\bfnm{Ronnie~L.}\binits{R.L.}},
\bauthor{\bsnm{Renaud},~\bfnm{Jean-Fran{\c{c}}ois}\binits{J.-F.}}
\AND
\bauthor{\bsnm{Zhou},~\bfnm{Xiaowen}\binits{X.}}
(\byear{2014}).
\btitle{Occupation times of intervals until first passage times for
spectrally negative L\'evy processes}.
\bjournal{Stochastic Process. Appl.}
\bvolume{124}
\bpages{1408--1435}.
\bid{doi={10.1016/j.spa.2013.11.005}, issn={0304-4149}, mr={3148018}}
\end{barticle}
%

\bptok{imsref}%
\endbibitem

\bibitem{nakagawa2006maintenance}
%
\begin{bbook}[auto:parserefs-M02]
\bauthor{\bsnm{Nakagawa},~\bfnm{T.}\binits{T.}}
(\byear{2006}).
\btitle{Maintenance Theory of Reliability}.
\blocation{Berlin}:
\bpublisher{Springer}.
\end{bbook}
%

\bptok{imsref}%
\endbibitem

\bibitem{dividends}
%
\begin{barticle}[mr]
\bauthor{\bsnm{Renaud},~\bfnm{Jean-Fran{\c{c}}ois}\binits{J.-F.}}
\AND
\bauthor{\bsnm{Zhou},~\bfnm{Xiaowen}\binits{X.}}
(\byear{2007}).
\btitle{Distribution of the present value of dividend payments in a L\'
evy risk model}.
\bjournal{J. Appl. Probab.}
\bvolume{44}
\bpages{420--427}.
\bid{doi={10.1239/jap/1183667411}, issn={0021-9002}, mr={2340208}}
\end{barticle}
%

\bptok{imsref}%
\endbibitem
\end{thebibliography}
\end{document}